\newtheorem{theorem}{Theorem}[subsection]
\newtheorem{lemma}[theorem]{Lemma}
\theoremstyle{definition}   
\newtheorem{definition}[theorem]{Definition}
\theoremstyle{remark}
\newtheorem{remark}[theorem]{Remark}
\numberwithin{equation}{section}
\title[Co-Higgs bundles of Schwarzenberger type and the determinant morphism]{Co-Higgs bundles of Schwarzenberger type and the determinant morphism}
\author{KUNTAL BANERJEE}
\address{Department of Pure Mathematics University Waterloo Canada}
\email{kub129@usask.ca}
\begin{document}
\begin{abstract}
We identify images of the determinant morphism of trace-free co-Higgs bundles modelled on rank $2$ Schwarzenberger bundles.
\end{abstract}
\maketitle 

\let\thefootnote\relax\footnotetext{2020\textit{ Mathematics Subject Classification.} 14J60}\let\thefootnote\relax\footnotetext{}\let\thefootnote\relax\footnotetext{}

\section{Introduction}
\subsection{Overview}

Let $X$ be a complex algebraic curve of genus $g_X\geq 2$. The Hitchin morphism $\mathcal{H}$ of Higgs bundles on $X$ is proper and surjective onto the Hitchin base $\mathcal{B}$ while a generic fiber is an abelian variety and a completely integrable system. When $X$ is a variety of a higher dimension $\mathcal{H}$ is not surjective though it is proper. The conjectured image of the Hithin morphism is a nonlinear closed subscheme $\mathcal{B}^{\heartsuit}\subset\mathcal{B}$ such that $\mathcal{H}$ factors through the inclusion of $\mathcal{B}^{\heartsuit}\subset\mathcal{B}$ and there exists an open dense subscheme $\mathcal{B}^{\diamondsuit}\subset \mathcal{B}^{\heartsuit}$ that is contained inside the image of $\mathcal{H}$ (\cite{Bao}). In particular, the spectral covering map $\pi_s: X_s\to X$ for $s\in\mathcal{B}^{\heartsuit}$ is finite and the spectral correspondence sets a bijective correspondence between the twisted vector bundles with spectral cover $X_s$ and Cohen-Macaulay sheaves of generic rank $1$ on $X_s$. The conjecture is verified for $\dim X = 2$ on vector bundles (\cite{hao}) and for principal Higgs bundles of certain classical Lie groups (\cite{huynh}). A conjectured image of $\mathcal{H}$ is proposed naturally for $V$-twisted Higgs bundles of rank $2$ on algebraic curves (\cite{Gallego}). Thus far no description more explicit than these conjectured images, where the twist is not a line bundle, is available in the literature. To overcome this difficulty we aim at exploring the Hitchin morphism restricted on twisted Higgs bundles whose underlying vector bundles are well-selected and fixed. Throughout this paper we will call such twisted Higgs bundles \textit{modelled} on their underlying bundles. For example, we can consider co-Higgs bundles or $T_{\mathbb{P}^2}$-twisted Higgs bundles modelled on rank $2$ Schwarzenberger bundles. Note that in the rank $2$ case $\mathcal{H}$ consists of a linear component --- the \textit{trace} component, and a nonlinear component --- the \textit{determinant} component. Moreover, $\mathcal{H}$ is represented by the determinant component if the trace component is shifted to $0$. So we focus on trace-free twisted Higgs bundles and study their determinants to identify the image of $\mathcal{H}$. The main results in this paper (Theorem \ref{det1}, Theorem \ref{det2}, Theorem \ref{det3}, and Theorem \ref{det4}) compute images of the determinant morphism of Gieseker semistable traceless co-Higgs bundles modelled on Schwarzenberger bundles $V_k^\rho$ for $k\neq 3$.\\ 

Co-Higgs bundles are important geometric objects on multiprojective spaces and stable co-Higgs bundles vanish on algebraic surfaces of general type and so on \cite{Ste3}. For a fixed nonsingular conic $\rho$ inside $\mathbb{P}^2$, Schwarzenberger bundles are a sequence of holomorphic vector bundles $\{V_k^\rho\}_{k=1}^{\infty}$ of rank $2$ on $\mathbb{CP}^2$ coming from a $2:1$ covering map $f^\rho: \mathbb{P}^1\times\mathbb{P}^1\to\mathbb{P}^2$ branched over $\rho$ (\cite{sch}, \cite{Fried}). Let $k\neq 3$. If $\phi$ is a nonzero stable traceless co-Higgs bundle modelled on $V_k^\rho$ then $\phi = \phi_0\otimes C$ for some  
$\phi_0\in H^0(\mbox{End}_0(E)\otimes\mathcal{O}(1))$ and $C\in H^0(T_{\mathbb{P}^2}(-1))$. This characterization of stable co-Higgs bundles along with the standard Euler sequence for $T_{\mathbb{P}^2}$ is utilized to construct moduli spaces and to find dimensions of spaces of their infinitesimal deformations (\cite{Ste3}). In particular, $\det(\phi) = \det(\phi_0)\otimes\mbox{Sym}^2(C)$ where $\mbox{Sym}^2(C)\in H^0(\mbox{Sym}^2(T_{\mathbb{P}^2})\otimes\mathcal{O}(-2))$ and $\det(\phi_0)\in H^0(\mathcal{O}(2))$. This description enables us to write down complex schemes that parametrize the images of the determinant morphism. On $V_3^\rho$ there are stable co-Higgs bundles which are not of such type. So we exclude the case $k=3$ completely from our analysis.

\subsection{Acknowledgments}
The content of the subsection 2.2 appeared within my doctoral thesis (\cite{Banerjee}). I am grateful to my PhD supervisor, Steven Rayan, for insightful guidance and continuous support, both during the thesis work and during an additional $6$-month postdoctoral fellowship that I held under his mentorship. I am fortunate to have been supported financially by the Department of Mathematics and Statistics, the College of Arts and Science and the College of Graduate and Postdoctoral Studies at the University of Saskatchewan though a Graduate Teaching Fellowship during my doctoral studies. I was also generously supported by S. Rayan's NSERC Discovery Grant during both my doctoral and postdoctoral work. I am thankful for many fruitful mathematical conversations with Mahmud Azam, Eric Boulter, Robert Cornea, Matthew Koban, Dat Minh Ha, and Evan Sundbo. Finally, I acknowledge Ruxandra Moraru for hosting me at the University of Waterloo through an additional postdoctoral fellowship in Winter 2025, during which time I made significant progress on this manuscript. Finally, I thank the anonymous reviewer for sharing meaningful suggestions towards improvement of the manuscript. 
\section{The determinant morphism on co-Higgs bundles of Schwarzenberger type}

\subsection{Preliminaries}

Let $X$ be an $n$-dimensional smooth connected complex projective algebraic variety with structure sheaf $\mathcal{O}_X$ and $H$ be a fixed ample line bundle on $X$. Let $E$ be a coherent sheaf of pure dimension $d = \dim (E)$ and rank $r$ on $X$. There are unique integers $\alpha_0(E),\dots, \alpha_d(E)$ with $\alpha_d(E) > 0$ such that the Hilbert polynomial of $E$ with respect to $H$, namely, 
\begin{equation}
P(E, m) := \chi\left(E\otimes H^m\right), 
\end{equation}
satisfies 
\begin{equation}
P(E, m) = \sum_{i = 0}^d \frac{\alpha_i(E)}{i!}m^i, ~~\forall m\geq 0.
\end{equation}
A coherent sheaf $E$ is said to be \textit{Gieseker (semi)stable} if for each nonzero proper coherent subsheaf $F\subset E$,
\begin{equation}
p(F) = \frac{P(F, m)}{r(F)}(\leq) < \frac{P(E, m)}{r(E)} = p(E)~~\forall m>> 0.
\end{equation}
If $E$ is torsion free then the \textit{degree} of $E$ is defined as the integer $\deg(E) := \alpha_{n-1}(E) - r(E)\cdot\alpha_{n-1}(E).$ It follows that $\deg(E) = c_1(E)\cdot c_1(H)^{n-1}\in H^{2n}(X, \mathbb{Z})\cong\mathbb{Z}$. Furthermore, $E$ is said to be \textit{slope (semi)stable} if 
\begin{equation}
\frac{\deg(F)}{r(F)}(\leq) <\frac{\deg(E)}{r(E)}.
\end{equation}
Note that $E$ is slope-stable $\implies$ $E$ is Gieseker stable $\implies$ $E$ is Gieseker semistable $\implies$ $E$ is slope semistable.\\

Let $E_1, E_2$ be Gieseker semistable. If $p(E_1) > p(E_2)$ then $H^0(\mbox{Hom}(E_1, E_2)) = 0$. If $E_1$ is Gieseker stable and $p(E_1) = p(E_2)$ then $H^0(\mbox{Hom}(E_1, E_2)) = 0$ or $E_1\cong E_2$. If $E$ is Gieseker stable then $E$ is simple i.e. $H^0(\mbox{End}(E))\cong\mathbb{C}$.\\

A pure coherent sheaf $E$ of dimension $d$ admits a filtration of coherent subsheaves 
\begin{equation}
0 = E_0\subsetneq E_1\subsetneq\dots\subsetneq E_l = E 
\end{equation}
such that $E_i/E_{i-1}$ is stable, pure of dimension $d$ and $p(E_i/E_{i-1}) = p(E)$. This is said to be a Jordan-H\"older filtration of $E$. If $E$ is Gieseker semistable then $E$ admits a Jordan-H\"older filtration. The \textit{grading} of $E$ is the direct sum $\mbox{gr}(E) = \oplus_i ~(E_i/E_{i-1})$ unique up to an isomorphism. Two Gieseker semistable sheaves $E$ and $F$ are \textit{strongly equivalent} if $\mbox{gr}(E)\cong\mbox{gr}(F)$.
In particular, if $E$ and $F$ are Gieseker stable then $E$ and $F$ are strongly equivalent if and only if isomorphic as sheaves.\\

Gieseker (semi)stable coherent sheaves form a moduli scheme (. \cite{Simp1}, \cite{Simp2}). Appealing to a projective embedding of $X$, it is shown at first that the Gieseker semistable sheaves with a fixed Hilbert polynomial $P$ are bounded (\cite{Simp1} page 56). Therefore, the functor $\mbox{\textit{Sch}}/\mathbb{C}\to \mbox{\textit{Sets}}$ sending a $\mathbb{C}$-scheme $S$ to the set of strong equivalence classes of families of Gieseker semistable sheaves with Hilbert polynomial $P$ parametrized by $S$ is universally corepresented by a projective scheme $\mathcal{M}_{X, H}^{ss}(P)$ (\cite{Simp1} page 65). There is an open quasi-projective subscheme $\mathcal{M}_{X, H}^{s}(P)\subset\mathcal{M}_{X, H}^{ss}(P)$ that parametrizes isomorphism classes of Gieseker stable sheaves. Unless $X$ is a curve the scheme $\mathcal{M}_{X, H}^{s}(P)$ is not necessarily smooth. At a Gieseker stable point $E$ if the space of obstructions vanishes i.e.  $H^2(X, \mbox{End}(E)) = 0$ then $E$ is a smooth point and the Zariski tangent space is $H^1(X, \mbox{End}(E))$.\\

Twisted Higgs sheaves are coherent sheaves with a new module structure (\cite{Nitin}, \cite{Simp1}, \cite{Simp2}, \cite{Gallego}). Let $V$ be a vector bundle. By a \textit{$V$-twisted Higgs sheaf $(E, \phi)$} we mean a pure coherent sheaf $E$ and a morphism of sheaves $\phi:E\to E\otimes V$, in other words, $\phi\in H^0(\mbox{Hom}(E, E\otimes V))$ which satisfies $\phi\wedge\phi = 0$. If $E$ is locally free then $(E, \phi)$ is said to be a \textit{$V$-twisted Higgs bundle}. The morphism $\phi$ is said to be a \textit{$V$-twisted Higgs field}. The last condition on $\phi$ is said to be the \textit{integrability condition} on $\phi$ (Lemma 2.12, Lemma 2.13 \cite{Simp1}). If $E$ is locally free of rank $r$ one can write 
\begin{equation}
\phi = \sum\limits_{i=1}^r\phi_i\otimes u_i,~~ \phi_i\in\mathcal{O}(\mbox{End}E)(\mathcal{U}),~~
\end{equation}
where $\mathcal{U}$ is a common trivializing neighborhood of $E$ and $V$ and $\{u_i\}$ is a local basis of $V$. Then integrability can be phrased as 
\begin{equation}\label{inte}
\phi\wedge\phi = \sum\limits_{i<j} [\phi_i, \phi_j]\otimes (u_i\wedge u_j) = 0.
\end{equation}

The equation \ref{inte} reads: $\phi$ is integrable if and only if the $\phi_i$'s commute with each other. If $V$ or $E$ is a line bundle then \ref{inte} is immediate. 

A homomorphism between $V$-twisted Higgs sheaves $(E_1, \phi_1)$ and $(E_2, \phi_2)$ is a commutative diagram as follows
\begin{equation}
\begin{tikzcd}
E_1 \arrow[r, "\phi_1"] \arrow[d, "\psi"]
& E_1\otimes V \arrow[d, "\psi \otimes 1 = \psi' "] \\
E_2 \arrow[r, "\phi_2 "]
& E_2\otimes V
\end{tikzcd}.
\end{equation}
In particular, $(E_1, \phi_1)$ and $(E_2, \phi_2)$ are said to be \textit{isomorphic} if $\psi$ is an isomorphism. In that case we write $(E_1, \phi_1)\cong (E_2, \phi_2)$.\\

A coherent subsheaf $F\subset E$ is said to be \textit{invariant} under $\phi$ if $\phi(F)\subseteq F\otimes V$. A coherent twisted sheaf $E$ is said to be Gieseker (semi)stable if for each invariant coherent subsheaf $F\subsetneq E$, it holds that $p(F) (\leq) < p(E)$. A Gieseker semistable $V$-twisted sheaf $(E, \phi)$ admits a Jordan-H\"older filtration of $V$-twisted subsheaves. Let $(E, \phi)$ be a Gieseker semistable pure coherent sheaf of dimension $d$. Then there exists a strict filtration of coherent $V$-twisted subsheaves 
\begin{equation}
0 = E_0\subsetneq E_1\subsetneq\dots\subsetneq E_l = E 
\end{equation}
such that $(E_i/E_{i-1}, \phi: E_i/E_{i-1}\to E_i/E_{i-1}\otimes V)$ is stable, pure of dimension $d$ and $p(E_i/E_{i-1}) = p(E)$. Two semistable twisted sheaves are said to be \textit{strongly equivalent} if their grading sheaves are isomorphic to each other. 

An invariant theoretic quotient of twisted Higgs sheaves is given with an extended construction of a quotient of pure semistable sheaves. There is a complex quasi-projective scheme $\mathcal{M}_{X, H}^{ss} (P, V)$ (\cite{Simp1}) that consists of the strongly equivalent semistable $\mbox{Sym}(V^*)$-modules and there exists an open subscheme $\mathcal{M}_{X, H}^{s}(P, V)\subset\mathcal{M}_{X, H}^{ss}(P, V)$ which parametrizes stable $\mbox{Sym}(V^*)$-modules.\\

There is another useful construction for twisted Higgs sheaves in terms of coherent sheaves on a projective variety. First, let $\pi:V\to X$ be the bundle projection map and $(E, \phi)$ be a $V$-twisted torsion free Higgs sheaf. Then $(E, \phi) \cong \pi_*(\mathcal{E}, \eta)$ where $\mathcal{E}$ is a coherent sheaf on $\mbox{Tot}(V)$ of dimension $n$ and $\eta$ is the tautological section of $\pi^*V$. Let $Z = \mathbb{P}(V^*\oplus\mathcal{O}) = \overline{\mbox{Tot}(V)}$. Denote the extension map of $\pi$ also with $\pi:Z\to X$ and with $D\subset Z$ the hyperplane divisor. Then $\exists k\in\mathbb{N}$ such that $H' = \pi^*H^k\otimes\mathcal{O}_Z(D)$ is ample on $Z$. Then $H'|_{Z\backslash D} = \pi^*H^k$. The sheaf $\mathcal{E}$ is a coherent sheaf on $Z$ supported outside $D$ and $\chi(\mathcal{E}\otimes(\pi^*H^k)^m) = \chi(\pi_*\mathcal{E}\otimes (H^k)^m)$ for all integers $m$. In particular, $(E, \phi)$ is Gieseker (semi)stable with respect to $H^k$ if and only if $\mathcal{E}$ is Gieseker (semi)stable with respect to $\pi^*H^k$. Here $\mathcal{M}_{X, H^k}^{ss}(P, V)$ is an open subscheme of $\mathcal{M}_{Z, H'}^{ss}(P)$ consisting of Gieseker semistable sheaves with dimension $\dim(E)$ supported outside $D$.\\ 

An integrable $V$-twisted Higgs field $\phi$ defines a chain complex of sheaves (\cite{Simp2}, \cite{Ste3})
\begin{equation}
C^\bullet: \mbox{End}E\xrightarrow[]{\wedge\phi}\mbox{End}E\otimes V\xrightarrow[]{\wedge\phi}\mbox{End}E\otimes\wedge^2 V.
\end{equation}
which fits into a specific exact sequence 
\begin{equation}
0\to\mathcal{E}^{1,0}\to T_{(E,\phi)} =\mathbb{H}^1(C^\bullet)\to\mathcal{E}^{0,1}\to\mathcal{E}^{2,0}\to \mathbb{H}^2(C^\bullet) 
\end{equation}

where 
\begin{equation}
\mathcal{E}^{p,q} = \frac{\ker\left(H^q(\mbox{End}E\otimes\wedge^pT_X)\xrightarrow[]{\wedge\phi} H^q(\mbox{End}E\otimes\wedge^{p+1}T_X\right)}{\mbox{Im}\left(H^q(\mbox{End}E\otimes\wedge^{p-1}T_X)\xrightarrow[]{\wedge\phi} H^q(\mbox{End}E\otimes\wedge^{p}T_X\right)}.\end{equation}

Let $P$ be a Hilbert polynomial with degree $d = n$ and $(E, \phi)$ is a torsion free $V$-twisted Higgs sheaf with $P(E) = P$. There exists an open dense subset $\mathcal{U}\subseteq X$ such that $E_{|\mathcal{U}}$ is a vector bundle and $\phi_{|\mathcal{U}}:E_{|\mathcal{U}}\to E_{|\mathcal{U}}\otimes V_{|\mathcal{U}}$ is a homomorphism of vector bundles and the characteristic coefficients are global sections of $\mbox{Sym}^iV_{|\mathcal{U}}$ for $i = 1,\dots, r$. By Hartogs' theorem the characteristic coefficients extend to global sections of $\mbox{Sym}^iV$ for each $i$. These sections are said to be \textit{the characteristic coefficients} of $(E, \phi)$. 
\begin{definition}\label{hitchmo}
The Hitchin morphism $\mathcal{H}:\mathcal{M}_{X, H}^{ss}(P, V)\to \mathcal{B}(r)=\oplus_{i=1}^r H^0(X, \mbox{Sym}^iV)$ computes the characteristic coefficients. The determinant morphism $\mbox{det}$ is the composition of the natural projection morphism $\mathcal{B}(r)\to H^0(X, \mbox{Sym}^rV)$ with $\mathcal{H}$. In particular, if $V = L$ where $L$ is a line bundle and $(E, \phi)\in\mathcal{M}_{X, H}^{ss}(P, L)$ then $\det(\phi)\in H^0(L^r)$.
\end{definition}

In \ref{hitchmo} it suffices to assume that $E$ is a vector bundle on $X$. If $(E, \phi:E\to E\otimes V)$ is a twisted torsion free sheaf then choose $(E^{**}, \phi^{**}: E^{**}\to E^{**}\otimes V)$, so that $P(E) = P(E^{**})$ and $\mathcal{H}(E, \phi) = \mathcal{H}(E^{**}, \phi^{**})$ and there is a commutative diagram as follows
\begin{equation}
\begin{tikzcd}
E \arrow[r, "\phi"] \arrow[d, "\iota"]
& E\otimes V \arrow[d, "\iota \otimes 1"] \\
E^{**} \arrow[r, "\phi^{**}"]
& E^{**}\otimes V
\end{tikzcd}.
\end{equation}

\begin{theorem}
(\cite{gallego2023higgs}, \cite{Simp2}) Hitchin morphism $\mathcal{H}: \mathcal{M}_{X, H^k}^{ss}(P, V)\to \mathcal{B}(r)=\oplus_{i=1}^r H^0(X, \emph{Sym}^iV)$ is proper. 
\end{theorem}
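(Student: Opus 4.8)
The plan is to verify the valuative criterion for properness, using the spectral description of twisted Higgs sheaves recalled above: a torsion-free $V$-twisted Higgs sheaf $(E,\phi)$ is the same datum as a coherent sheaf $\mathcal{E}$ on $Z=\mathbb{P}(V^*\oplus\mathcal{O})$ of dimension $n$ supported on $\mbox{Tot}(V)=Z\setminus D$, and Gieseker (semi)stability of $(E,\phi)$ with respect to $H^k$ matches that of $\mathcal{E}$ with respect to $H'=\pi^*H^k\otimes\mathcal{O}_Z(D)$. Since $\mathcal{M}_{Z,H'}^{ss}(P)$ is a projective scheme, hence proper over $\mathbb{C}$, the essential content is that, although the inclusion $\mathcal{M}_{X,H^k}^{ss}(P,V)\hookrightarrow\mathcal{M}_{Z,H'}^{ss}(P)$ is only an open immersion, the composite $\mathcal{H}$ into the affine base $\mathcal{B}(r)$ is nonetheless proper. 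The missing boundary of the open locus consists precisely of sheaves whose support meets $D$, and these correspond to characteristic data escaping to infinity, which cannot occur along a family with prescribed image in $\mathcal{B}(r)$.

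Concretely, let $R$ be a discrete valuation ring with fraction field $K$, generic point $\eta$ and closed point $0$. Suppose we are given a morphism $\mbox{Spec}(R)\to\mathcal{B}(r)$ together with a lift of its restriction to $\eta$ to a $K$-point of $\mathcal{M}_{X,H^k}^{ss}(P,V)$; I must produce a unique extension to an $R$-point lying over the given map to $\mathcal{B}(r)$. Translating through the spectral correspondence, the $K$-point is a Gieseker-semistable sheaf $\mathcal{E}_\eta$ on $Z_K$ supported off $D_K$, while the $R$-point of $\mathcal{B}(r)$ prescribes characteristic coefficients $s\in\bigoplus_i H^0(X_R,\mbox{Sym}^iV)$ over all of $\mbox{Spec}(R)$. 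Because $\mathcal{M}_{Z,H'}^{ss}(P)$ is proper, after possibly a finite extension of $R$ the family $\mathcal{E}_\eta$ extends to an $R$-flat family $\mathcal{E}_R$ of Gieseker-semistable sheaves on $Z_R$ with central fibre $\mathcal{E}_0$, and this extension is unique; since both source and target are separated, this already supplies the uniqueness half of the criterion.

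The decisive step, and the main obstacle, is to show that the limit $\mathcal{E}_0$ is again supported away from $D$, so that it defines a genuine $V$-twisted Higgs sheaf on the central fibre rather than a sheaf that has partly escaped to the divisor at infinity. The point is that the associated field $\phi$ satisfies its characteristic (Cayley--Hamilton) relation, whose coefficients are the entries of $s$; equivalently, the support of $\mathcal{E}_R$ lies in the spectral subscheme cut out by $s$ inside $\mbox{Tot}(V)_R$. Since $s$ is defined over all of $\mbox{Spec}(R)$ with values in $\bigoplus_i H^0(\mbox{Sym}^iV)$, the special value $s_0$ is a finite section, so the subscheme of $Z_0$ it cuts out is a spectral cover contained in $\mbox{Tot}(V)_0$ and disjoint from $D_0$; by flatness and the compatibility of the characteristic coefficients with specialization, the schematic support of $\mathcal{E}_0$ is contained in this cover, and no component or embedded point can lie on $D_0$. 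Invoking the integrability dictionary of Lemma 2.12 and Lemma 2.13 of \cite{Simp2}, $\mathcal{E}_0$ corresponds to an integrable $V$-twisted Higgs sheaf $(E_0,\phi_0)$, the desired central point lying over $s_0$. This verifies the valuative criterion and establishes properness; the one genuinely delicate estimate is the boundedness of the eigenvalues, i.e. the exclusion of $D_0$ from the support, which is exactly where the hypothesis that the image in the affine base $\mathcal{B}(r)$ extends over $\mbox{Spec}(R)$ is used.
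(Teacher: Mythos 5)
Your proof is correct and follows essentially the same route as the paper: both verify the valuative criterion by viewing $\mathcal{M}_{X,H^k}^{ss}(P,V)$ as the open locus of the projective scheme $\mathcal{M}_{Z,H'}^{ss}(P)$ consisting of sheaves supported off $D$, and both rule out escape to $D$ in the limit by noting that the characteristic coefficients extend over all of $\mbox{Spec}(R)$, so the flat limit sheaf stays supported on the spectral subscheme inside $\mbox{Tot}(V)$. The only presentational difference is that the paper makes the passage from a map into the (merely corepresenting) moduli scheme to an honest flat family explicit by lifting through the quotient $Q_2\to\mathcal{M}_{Z,H'}^{ss}(P)$ over an integral extension of the discrete valuation ring, which is the precise source of the finite base change you allow.
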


\subsection{Schwarzenberger bundles and determinant morphism}\label{cohiggs}
\begin{definition}
For each nonzero irreducible polynomial $\rho\in H^0(\mathbb{P}^2, \mathcal{O}(2))$, there exists a holomorphic cover $f^\rho:\mathbb{P}^1\times\mathbb{P}^1\to\mathbb{P}^2$ branched over the nonsingular conic $\rho = 0$. The bundle $V_k^\rho = f^\rho_*\mathcal{O}(0, k)$ where $\mathcal{O}(0, k) = \mathcal{O}\boxtimes\mathcal{O}(k)$ is said to be the $k$-th Schwarzenberger bundle. 
\end{definition}
For $k = 0, 1, 2$ the bundle $V_k^\rho$ is rigid, that is, $H^1(\mbox{End}(V_k^\rho)) = 0$. For $k\geq 3$, it follows that  $H^1(\mbox{End}_0(V_k^\rho))\cong \mathbb{C}^{k^2 - 4}$. For every $\rho$, it follows that 
\begin{equation}
V_k^\rho =\begin{cases}
   \mathcal{O}\oplus\mathcal{O}(-1);~ k = 0\\
   \mathcal{O}\oplus\mathcal{O};~ k=1\\
   T_{\mathbb{P}^2};~ k=2.
\end{cases}\end{equation} On the other hand, if $V_3^\rho\cong V_3^{\rho'}$ then the zero schemes $\rho = 0$ and $\rho' = 0$ coincide. Furthermore, $c_1(V_k^\rho) = (k-1)H$ and $c_2(V_k^\rho) = \frac{k(k-1)}{2}H^2$. Co-Higgs bundles on $V_k^\rho$ are stable under infinitesimal deformations in a sense that the underlying bundle of a deformed co-Higgs bundle is another Schwarzenberger bundle. The map $f^\rho$ also offers a spectral viewpoint on $\mathbb{P}^2$ which is briefly highlighted in \cite{Ste3}.\\

Let $\mathcal{M}_{\mathbb{P}^2}(V_k^\rho, T_{\mathbb{P}^2})$ denote the space of trace-free Gieseker semistable co-Higgs bundles modelled on $V_k^\rho$ and $\det|_{\mathcal{M}_{\mathbb{P}^2}(V_k^\rho, T_{\mathbb{P}^2})}: {\mathcal{M}_{\mathbb{P}^2}(V_k^\rho, T_{\mathbb{P}^2})}\to H^0(\mbox{Sym}^2(T_{\mathbb{P}^2}))$ be the determinant morphism. We will identify $\mbox{Im}(\det|_{\mathcal{M}_{\mathbb{P}^2}(V_k^\rho, T_{\mathbb{P}^2})})$ with known spaces. The chosen polarization for our computations is $H = \mathcal{O}_{\mathbb{P}^2}(1)$. First we prove some elementary lemmas which will be necessary in our analysis.

\begin{lemma}
Let $C, C' \in H^0(T_{\mathbb{P}^2}(-1))$ be such that $C\otimes C = C'\otimes C'\in H^0(T_{\mathbb{P}^2}\otimes T_{\mathbb{P}^2}\otimes\mathcal{O}(-2))$. Then $C=\pm C'$.
\end{lemma}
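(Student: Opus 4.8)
The plan is to linearize $H^0(T_{\mathbb{P}^2}(-1))$ via the twisted Euler sequence and then read off $C\otimes C$ fibrewise as a decomposable (rank-one) tensor. First I would twist the Euler sequence $0\to\mathcal{O}\to\mathcal{O}(1)^{\oplus 3}\to T_{\mathbb{P}^2}\to 0$ by $\mathcal{O}(-1)$ to obtain $0\to\mathcal{O}(-1)\to\mathcal{O}^{\oplus 3}\to T_{\mathbb{P}^2}(-1)\to 0$, whose long exact sequence in cohomology, together with $H^0(\mathcal{O}(-1))=H^1(\mathcal{O}(-1))=0$ on $\mathbb{P}^2$, yields the identification $H^0(T_{\mathbb{P}^2}(-1))\cong\mathbb{C}^3$. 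Concretely, a section $C$ is a constant vector $a\in\mathbb{C}^3$, and its value at a point $[z]\in\mathbb{P}^2$ is the class $\bar a := a \bmod\langle z\rangle$ in the fibre $\mathbb{C}^3/\langle z\rangle$ of $T_{\mathbb{P}^2}(-1)$, the quotient by the line spanned by $z=(z_0,z_1,z_2)$.

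Next I would observe that the section $C\otimes C$ of $(T_{\mathbb{P}^2}(-1))^{\otimes 2}=T_{\mathbb{P}^2}\otimes T_{\mathbb{P}^2}\otimes\mathcal{O}(-2)$ is, at each point $[z]$, the decomposable tensor $\bar a\otimes\bar a\in(\mathbb{C}^3/\langle z\rangle)^{\otimes 2}$. Writing $C'$ as the vector $a'$, the hypothesis $C\otimes C=C'\otimes C'$ becomes the pointwise identity $\bar a\otimes\bar a=\bar{a'}\otimes\bar{a'}$ for every $[z]$. Since $C,C'\neq 0$, the vectors $a,a'$ are nonzero, so on the dense open set $\mathbb{P}^2\setminus\{[a],[a']\}$ both $\bar a$ and $\bar{a'}$ are nonzero. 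There the elementary fact that $u\otimes u=w\otimes w$ with $u,w\neq 0$ forces $u=\pm w$ (the image line of a rank-one tensor is an invariant, and equating the tensors pins the scalar to $\pm 1$) gives $\bar a=\pm\bar{a'}$, i.e. $a\mp a'\in\langle z\rangle$, at each such $[z]$.

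The main obstacle is that the sign above is a priori allowed to vary with $[z]$, so one cannot immediately conclude $a=\pm a'$. I would resolve this by a counting/rigidity argument: for a fixed nonzero $v\in\mathbb{C}^3$ the locus $\{[z]:v\in\langle z\rangle\}$ is the single point $[v]$, whereas for $v=0$ it is all of $\mathbb{P}^2$. The pointwise alternative shows that an infinite set of $[z]$ lies in $\{[z]:a-a'\in\langle z\rangle\}\cup\{[z]:a+a'\in\langle z\rangle\}$, and a union of two such loci is infinite only if one of $a-a'$, $a+a'$ vanishes; hence $a=a'$ or $a=-a'$, that is $C=\pm C'$. Structurally this is the same phenomenon as the injectivity of the multiplication map $\mathrm{Sym}^2 H^0(T_{\mathbb{P}^2}(-1))\to H^0(\mathrm{Sym}^2 T_{\mathbb{P}^2}(-1))$: granting it, the equality $C^2=C'^2$ already holds in the polynomial ring $\mathrm{Sym}^\bullet\mathbb{C}^3$, an integral domain, so $(C-C')(C+C')=0$ forces $C=\pm C'$ — and the factorisation is precisely where integrality performs the sign-tracking done by hand above.
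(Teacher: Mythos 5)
Your proof is correct, and it reaches the conclusion by a genuinely different route from the paper's. The paper argues in a single affine chart: writing $C=s_1e_1+s_2e_2$ and $C'=s_1'e_1+s_2'e_2$ in a local frame of $T_{\mathbb{P}^2}(-1)$, it extracts the three scalar identities $s_1^2=s_1'^2$, $s_1s_2=s_1's_2'$, $s_2^2=s_2'^2$, concludes $C=\pm C'$ on the chart (the cross term locks the two signs together, and holomorphy on a connected chart keeps the sign constant), and then propagates the identity to all of $\mathbb{P}^2$. You globalize first: the twisted Euler sequence identifies a section with a constant vector $a\in\mathbb{C}^3$ whose value at $[z]$ is $a\bmod\langle z\rangle$, the pointwise rank-one-tensor fact gives $a\mp a'\in\langle z\rangle$ away from the two zero points, and the observation that $\{[z]:v\in\langle z\rangle\}$ is the single point $[v]$ when $v\neq 0$ forces one of $a\pm a'$ to vanish. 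Both proofs share the same linear-algebra kernel ($u\otimes u=w\otimes w$ with $u,w\neq 0$ implies $u=\pm w$); they differ in how the pointwise sign is made global. The paper's local argument is the more general one --- it applies verbatim to sections of any rank-two bundle on a connected complex manifold --- whereas yours exploits the specific identification $H^0(T_{\mathbb{P}^2}(-1))\cong\mathbb{C}^3$ and the one-point zero loci of its nonzero sections, which buys a coordinate-free sign-rigidity argument and meshes naturally with the zero-scheme reasoning the paper uses later in the proof of Theorem \ref{det1}. Your closing remark via integrality of $\mathrm{Sym}^\bullet\mathbb{C}^3$ is a third valid packaging, but note it presupposes the injectivity of $\mathrm{Sym}^2H^0(T_{\mathbb{P}^2}(-1))\to H^0(\mathrm{Sym}^2T_{\mathbb{P}^2}\otimes\mathcal{O}(-2))$, which is exactly what your main pointwise argument supplies, so it should be presented as a corollary rather than as an independent shortcut.
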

\begin{proof}
The strategy of the proof makes sense for any section $C\in H^0(V)\backslash\{0\}$ where $V$ is a bundle of rank $2$. Let $U_0 = \mathbb{C}^2$ be a standard coordinate chart on $\mathbb{P}^2$. On $U_0$ write $C$ and $C'$ as $C = s_1e_1 + s_2e_2$ and $C' = s_1'e_1 + s_2'e_2$. Here $s_1, s_2, s_1', s_2'$ are holomophic functions on $U_0$ and $\{e_1, e_2\}$ is a local basis of sections of $T_{\mathbb{P}^2}(-1)$. Then $C\otimes C = C'\otimes C'$ implies that \begin{equation}
s_1^2 = s_1'^2;~ s_1s_2 = s_1's_2';~ s_2^2 = s_2'^2.
\end{equation}
These three equations together imply that either $C = C'$ or $C = - C'$ throughout on $U_0$. Either of these equalities extends to $C = C'$ or $C = - C'$ throughout $\mathbb{P}^2$.
\end{proof}

\begin{lemma}
Every section $C\in H^0(T_{\mathbb{P}^2}(-1))$ defines a global holomorphic section $\emph{Sym}^2(C)\in H^0(\emph{Sym}^2(T_{\mathbb{P}^2})\otimes\mathcal{O}(-2))$. Therefore, $\emph{Sym}^2(C) = \emph{Sym}^2(C')$ if and only if $C\otimes C = C'\otimes C'$.
\end{lemma}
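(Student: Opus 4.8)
The plan is to realize $\mathrm{Sym}^2(C)$ as the image of $C\otimes C$ under the canonical symmetrization map and then to invert this passage using the characteristic-zero splitting of the tensor square. First I would recall the natural surjection of vector bundles $q:T_{\mathbb{P}^2}\otimes T_{\mathbb{P}^2}\to\mathrm{Sym}^2(T_{\mathbb{P}^2})$ sending $v\otimes w$ to the product $v\cdot w$. Twisting by $\mathcal{O}(-2)$ gives a holomorphic bundle morphism $q\otimes\mathrm{id}:T_{\mathbb{P}^2}\otimes T_{\mathbb{P}^2}\otimes\mathcal{O}(-2)\to\mathrm{Sym}^2(T_{\mathbb{P}^2})\otimes\mathcal{O}(-2)$ and hence a map on global sections. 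Since $C\in H^0(T_{\mathbb{P}^2}(-1))$ produces $C\otimes C\in H^0(T_{\mathbb{P}^2}\otimes T_{\mathbb{P}^2}\otimes\mathcal{O}(-2))$ under the identification $T_{\mathbb{P}^2}(-1)\otimes T_{\mathbb{P}^2}(-1)\cong T_{\mathbb{P}^2}\otimes T_{\mathbb{P}^2}\otimes\mathcal{O}(-2)$, I would simply define $\mathrm{Sym}^2(C):=(q\otimes\mathrm{id})(C\otimes C)$. Being the image of a holomorphic section under a holomorphic bundle map, it is automatically a holomorphic section of $\mathrm{Sym}^2(T_{\mathbb{P}^2})\otimes\mathcal{O}(-2)$, which settles the first assertion.

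For the equivalence, one direction is immediate: if $C\otimes C=C'\otimes C'$ then applying $q\otimes\mathrm{id}$ gives $\mathrm{Sym}^2(C)=\mathrm{Sym}^2(C')$. The substantive direction is the converse, and the key observation I would exploit is that $C\otimes C$ is a \emph{symmetric} tensor. In characteristic zero there is a canonical splitting $T_{\mathbb{P}^2}\otimes T_{\mathbb{P}^2}\cong\mathrm{Sym}^2(T_{\mathbb{P}^2})\oplus\wedge^2T_{\mathbb{P}^2}$ together with a holomorphic bundle section $\iota:\mathrm{Sym}^2(T_{\mathbb{P}^2})\to T_{\mathbb{P}^2}\otimes T_{\mathbb{P}^2}$, $v\cdot w\mapsto\tfrac{1}{2}(v\otimes w+w\otimes v)$, satisfying $q\circ\iota=\mathrm{id}$ and having image the symmetric summand. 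Since the antisymmetric part of $v\otimes v$ vanishes fiberwise, the composite $(\iota\otimes\mathrm{id})\circ(q\otimes\mathrm{id})$ fixes $C\otimes C$; that is, $(\iota\otimes\mathrm{id})(\mathrm{Sym}^2(C))=C\otimes C$, and likewise for $C'$. Therefore $\mathrm{Sym}^2(C)=\mathrm{Sym}^2(C')$ forces $C\otimes C=C'\otimes C'$ after applying the holomorphic map $\iota\otimes\mathrm{id}$.

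Alternatively, and probably more in keeping with the preceding lemma, I would verify everything in a trivializing chart: writing $C=s_1e_1+s_2e_2$ as there, $\mathrm{Sym}^2(C)$ has coordinates $(s_1^2,\,2s_1s_2,\,s_2^2)$ in the basis $\{e_1^2,e_1e_2,e_2^2\}$ while $C\otimes C$ has coordinates $(s_1^2,\,s_1s_2,\,s_1s_2,\,s_2^2)$ in the basis $\{e_i\otimes e_j\}$, and these two tuples manifestly determine one another; thus $\mathrm{Sym}^2(C)=\mathrm{Sym}^2(C')$ is equivalent to $s_1^2=s_1'^2$, $s_1s_2=s_1's_2'$, $s_2^2=s_2'^2$, the very conditions encoding $C\otimes C=C'\otimes C'$. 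I do not anticipate a genuine obstacle here, since the content is elementary fiberwise linear algebra transported through natural bundle maps; the only point demanding care is that these fiberwise identities glue to global holomorphic ones, which is automatic because $q$ and $\iota$ are morphisms of holomorphic vector bundles and the splitting of the tensor square is functorial in the bundle, hence compatible with the $\mathcal{O}(-2)$-twist.
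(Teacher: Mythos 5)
Your proof is correct, and your primary argument is genuinely different from the paper's. The paper proceeds entirely in local coordinates: it \emph{defines} $\mathrm{Sym}^2(C)$ on a chart as $s_1^2e_1^2+2s_1s_2e_1e_2+s_2^2e_2^2$, writes out the induced $3\times 3$ transition matrix $\mathrm{Sym}^2(g_{\alpha\beta})$ explicitly, and checks that the local representatives transform correctly, so that the section is globally defined; the equivalence with $C\otimes C=C'\otimes C'$ is then read off from the definition, exactly as in your ``alternative'' paragraph. Your main route instead realizes $\mathrm{Sym}^2(C)$ intrinsically as $(q\otimes\mathrm{id})(C\otimes C)$ for the canonical surjection $q:T_{\mathbb{P}^2}\otimes T_{\mathbb{P}^2}\to\mathrm{Sym}^2(T_{\mathbb{P}^2})$, which makes global holomorphy automatic and dispenses with the transition-function computation entirely, and then recovers $C\otimes C$ from $\mathrm{Sym}^2(C)$ via the characteristic-zero splitting $\iota$ together with the observation that $C\otimes C$ is a symmetric tensor. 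This buys a cleaner, coordinate-free argument in which the ``if and only if'' is a formal consequence of $q\circ\iota=\mathrm{id}$ and $\iota\circ q$ fixing symmetric tensors, whereas the paper's computation has the advantage of exhibiting the explicit transition data $\mathrm{Sym}^2(g_{\alpha\beta})$ that is reused implicitly in the determinant computations of Theorems \ref{det1}--\ref{det4}. Both arguments are complete; your local-coordinate sketch at the end coincides with the paper's reasoning, so nothing is missing.
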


\begin{proof}
The strategy of the proof makes sense for any section $C\in H^0(V)$ where $V$ is a bundle of rank $2$. The section $\mbox{Sym}^2(C)\in H^0(\mbox{Sym}^2(T_{\mathbb{P}^2})\otimes\mathcal{O}(-2))$ is defined locally as $s_1^2e_1^2 + 2s_1s_2e_1e_2 + s_2^2e_2^2$. Let $g_{\alpha\beta}$ be a transition function of $T_{\mathbb{P}^2}(-1)$ given by \begin{equation}
   g_{\alpha\beta} = \begin{bmatrix}
        g_{\alpha\beta}^{11} & g_{\alpha\beta}^{12}\\
        g_{\alpha\beta}^{21} & g_{\alpha\beta}^{22}
    \end{bmatrix}. 
\end{equation} Then \begin{equation}
    \mbox{Sym}^2(g_{\alpha\beta}) = \begin{bmatrix}
        {g_{\alpha\beta}^{11}}^2 & g_{\alpha\beta}^{11}g_{\alpha\beta}^{12} & {g_{\alpha\beta}^{12}}^2\\
        2g_{\alpha\beta}^{21}g_{\alpha\beta}^{11} & g_{\alpha\beta}^{22}g_{\alpha\beta}^{11} + g_{\alpha\beta}^{21}g_{\alpha\beta}^{12} & 2g_{\alpha\beta}^{22}g_{\alpha\beta}^{12}\\
        {g_{\alpha\beta}^{21}}^2 & g_{\alpha\beta}^{21}g_{\alpha\beta}^{22} & {g_{\alpha\beta}^{22}}^2
    \end{bmatrix}
\end{equation} with respect to the ordered basis $\{e_1^2, e_1e_2, e_2^2\}$. The local representatives of $\mbox{Sym}^2(C)$ on trivializing neighborhoods agree with the effect of the transition data $\mbox{Sym}^2(g_{\alpha\beta})$ of $\mbox{Sym}^2(T_{\mathbb{P}^2})\otimes\mathcal{O}(-2)$. The second part follows immediately from the definition of $\mbox{Sym}^2(C)$.
\end{proof}

\begin{theorem}\label{det1}
There is a bijection $\emph{Im}\left(\det|_{\mathcal{M}_{\mathbb{P}^2}(V_0^\rho, T_{\mathbb{P}^2})}\right)\to\frac{H^0(\mathcal{O}(2))^\times\times H^0(T_{\mathbb{P}^2}(-1))^\times}{(q, C)\sim (\alpha^2\cdot q, \alpha^{-1}\cdot C)}\sqcup\{0\}$ given by $q\otimes\mbox{Sym}^2(C)\mapsto [(q, C)]$.
\end{theorem}
\begin{proof}
Since $V_0^\rho = \mathcal{O}\oplus\mathcal{O}(-1)$ has coprime rank and degree a traceless co-Higgs field $\phi:\mathcal{O}\oplus\mathcal{O}(-1)\to \left(\mathcal{O}\oplus\mathcal{O}(-1)\right)\otimes T_{\mathbb{P}^2}$ is Gieseker semistable if and only if Gieseker stable. It follows from integrability of $\phi$ and Hartogs' theorem that (\cite{Rayan}) 
\begin{equation}
\phi = \begin{bmatrix}
    \lambda & \mu\\
    1 & -\lambda
\end{bmatrix}\otimes C,~~ \lambda\in H^0(\mathcal{O}(1)), \mu\in H^0(\mathcal{O}(2)), C\in H^0(T_{\mathbb{P}^2}(-1)).
\end{equation}
Thus $\det(\phi) = -(\lambda^2 + \mu)\otimes\mbox{Sym}^2(C)$. So,
\begin{align*}
&\mbox{Im}\left(\det|_{\mathcal{M}_{\mathbb{P}^2}(V_0^\rho, T_{\mathbb{P}^2})}\right) \\
& = \left\{(\lambda^2 + \mu)\otimes\mbox{Sym}^2(C):\lambda\in H^0(\mathcal{O}(1)),~\mu\in H^0(\mathcal{O}(2)),~ C\in H^0(T_{\mathbb{P}^2}(-1))\right\}.
\end{align*}
Now
\begin{equation}
\phi_0 = \begin{bmatrix}
    \lambda & \mu\\
    1 & -\lambda
\end{bmatrix}\mapsto -(\lambda^2 + \mu)
\end{equation} is a surjection onto $H^0(\mathcal{O}(2))$. A global holomorphic section of $\mathcal{O}(1)$ is presented by a polynomial $\lambda = az + bw + c$ of total degree $\leq 1$ and a global holomorphic section of $\mathcal{O}(2)$ is presented by a complex polynomial $s = az^2 + bzw + cw^2 + dz + ew + f$ of total degree $\leq 2$. It follows that $s = \lambda^2 + \mu$. This is an elementary verification via adjusting the coefficients of $s$.\\

(i) Let $a\neq 0$. Then 
\begin{align*}
az^2 + bzw + cw^2 + dz + ew + f = \left(\underbrace{\sqrt{a}z + \frac{b}{2\sqrt{a}}w + \frac{d}{2\sqrt{a}}}_{\lambda}\right)^2\\ + \underbrace{\left(c - \frac{b^2}{4a}\right)w^2 + \left(e - \frac{bd}{2a}\right)w + \left(f - \frac{d^2}{4a}\right)}_{\mu}.
\end{align*}

(ii) Let $a = 0$ and $c\neq 0$. Then 
\begin{align*}
cw^2 + bzw + dz + ew + f = \left(\underbrace{\sqrt{c}w  + \frac{b}{2\sqrt{c}}z + \frac{e}{2\sqrt{c}}}_{\lambda}\right)^2\\
+ \left(\underbrace{-\frac{b^2}{4c}z^2 + \left(d - \frac{be}{2c}\right)z + \left(f - \frac{e^2}{4c}\right)}_{\mu}\right)
\end{align*}

(iii) Let $a = 0,~ c = 0,~ b\neq 0$. It follows after application of a rotation $\begin{cases}
z = z' + w'\\
w = z' - w'
\end{cases}$ that \begin{align*}
bzw + dz + ew + f = \left(\underbrace{\sqrt{b}z' + \frac{d+e}{2\sqrt{b}}}_{\lambda}\right)^2 + \left(\underbrace{-bw'^2 + (d-e)w' + \left(f - \frac{(d+e)^2}{4b}\right)}_{\mu}\right).
\end{align*}

(iv) Let $a = b = c = 0$. Then $\lambda = 0$ and $\mu = dz + ew + f$.\\

Thus the determinant section is $\det(\phi) = q\otimes\mbox{Sym}^2(C)$ for some $q\in H^0(\mathcal{O}(2))$ and $C\in H^0(T_{\mathbb{P}^2}(-1))$. So the zero section lies in the image. Now let suppose that $q\otimes\mbox{Sym}^2(C) = q'\otimes\mbox{Sym}^2(C')$ for some nonzero global sections $q, q'$ and $C, C'$. This assumption is equivalent to $q\otimes C\otimes C = q'\otimes C'\otimes C'$. The zero schemes $\mathcal{Z}(q), \mathcal{Z}(q')$ are complex projective curves and the zero schemes $\mathcal{Z}(C), \mathcal{Z}(C')$ are two points on $\mathbb{P}^2$. Thus $\mathcal{Z}(q) = \mathcal{Z}(q')$, that is, $q' = \alpha^2\cdot q$ for some $\alpha\neq 0$. Thus $C\otimes C = (\alpha\cdot C')\otimes (\alpha\cdot C')$, i.e., $C = \pm\alpha\cdot C'$. Consider the action of $\mathbb{C}^\times$ on $H^0(\mathcal{O}(2))^\times\times H^0(T_{\mathbb{P}^2}(-1))^\times$ by $\alpha\cdot (q, C) = (\alpha^2q, \alpha^{-1}C)$. The quotient space characterizes the points in the image for which both $q$ and $C$ are nonzero. The zero section is obtained as a determinant if $\phi_0 = \begin{bmatrix}
    \lambda & \mu\\
    1 & -\lambda
\end{bmatrix}$ is nilpotent or $C = 0$. The quotient space along with the zero section is in bijection with the image of the determinant morphism by $q\otimes\mbox{Sym}^2(C)\mapsto [(q, C)]$. 
\end{proof}
\begin{remark}\label{dimcom}
Note that \begin{align*}
&\dim \mbox{Im}\left(\det|_{\mathcal{M}_{\mathbb{P}^2}(V_0^\rho, T_{\mathbb{P}^2})}\right) = \dim \left(\frac{H^0(\mathcal{O}(2))^\times\times H^0(T_{\mathbb{P}^2}(-1))^\times}{(q, C)\sim (\alpha^2\cdot q, \alpha^{-1}\cdot C)}\sqcup\{0\}\right)\\ & < \dim \left(H^0(\mathcal{O}(2))^\times\times H^0(T_{\mathbb{P}^2}(-1))^\times\right)
= 6 + 3 = 9 < 27 = \dim H^0(\mbox{Sym}^2(T_{\mathbb{P}^2})).
\end{align*} Thus $\det|_{\mathcal{M}_{\mathbb{P}^2}(V_0^\rho, T_{\mathbb{P}^2})}$ is not surjective onto $H^0(\mbox{Sym}^2(T_{\mathbb{P}^2}))$.
\end{remark}
\begin{theorem}\label{det2}
There is a bijection 
\begin{align*}
&\emph{Im}\left(\det|_{\mathcal{M}_{\mathbb{P}^2}(V_1^\rho, T_{\mathbb{P}^2})}\right)\to\\
&\left(\frac{H^0(\mathcal{O}(2))^\times\times H^0(T_{\mathbb{P}^2}(-1))^\times}{(q, C)\sim (\alpha^2\cdot q, \alpha^{-1}\cdot C)}\sqcup\frac{H^0(T_{\mathbb{P}^2})}{\{\pm 1\}}\right)\bigg/[(q, C)]\sim [A]\iff q\otimes\emph{Sym}^2(C) = \emph{Sym}^2(A)
\end{align*} given by $q\otimes\emph{Sym}^2(C)\mapsto [(q, C)]$ and $\emph{Sym}^2(A)\mapsto [A]$.
\end{theorem}

\begin{proof}
Since $V_1^\rho = \mathcal{O}\oplus\mathcal{O}$ is Gieseker semistable every traceless co-Higgs bundle $\phi$ modelled on $V_1^\rho$ is Gieseker semistable. Write 
\begin{equation}
\phi = \begin{bmatrix}
  A & B\\
    C & -A
\end{bmatrix},~~ A, B, C\in H^0(T_{\mathbb{P}^2}). 
\end{equation}
Let $B, C\neq 0$. The integrability condition of $\phi$ implies that either 
\begin{equation}
\phi = \begin{bmatrix}
    \lambda & \mu\\
    \mu' & -\lambda
\end{bmatrix}\otimes C',~~ \lambda, \mu, \mu'\in H^0(\mathcal{O}(1)), C\in H^0(T_{\mathbb{P}^1}(-1))
\end{equation}
or 
\begin{equation}
\phi = \begin{bmatrix}
    \lambda & \mu\\
    1 & -\lambda
\end{bmatrix}\otimes C ,~~\mu\neq 0, \lambda\in\mathbb{C}.
\end{equation}
In the second case, $\det(\phi) = -(\lambda^2 + \mu)\mbox{Sym}^2(C)$. \\

On the other hand let $B = 0$ or $C = 0$. Without loss of generality $C=0$. So $\phi = \begin{bmatrix}
    A & B\\
    0 & -A
\end{bmatrix}.$ The integrability condition of $\phi$ is that $A\wedge B = 0$. Irrespective of $B=0$ or $B\neq 0$ it follows that $\det(\phi) = -\mbox{Sym}^2(A)$.\\

Considering all possible cases of the determinants, it follows that \begin{align*}
&\mbox{Im}\left(\det|_{\mathcal{M}_{\mathbb{P}^2}(V_1^\rho, T_{\mathbb{P}^2})}\right)\\
&= \left\{q\otimes\mbox{Sym}^2(C): q\in H^0(\mathcal{O}(2)),~ C\in H^0(T_{\mathbb{P}^2}(-1))\right\}\bigcup \left\{\mbox{Sym}^2(A): A\in H^0(T_{\mathbb{P}^2})\right\}\\
&= \left\{q\otimes\mbox{Sym}^2(C): q\in H^0(\mathcal{O}(2))^\times,~ C\in H^0(T_{\mathbb{P}^2}(-1))^\times\right\}\bigcup \left\{\mbox{Sym}^2(A): A\in H^0(T_{\mathbb{P}^2})\right\}.
\end{align*}

However, the intersection of these two collections of determinant sections in the last step is nonempty. There are sections $q\in H^0(\mathcal{O}(2)),~ C\in H^0(T_{\mathbb{P}^2}(-1))$ and $A\in H^0(T_{\mathbb{P}^2})$ such that $$q\otimes\mbox{Sym}^2(C) = \mbox{Sym}^2(A).$$ Such sections should be identified as the same. On the other hand sections of the form $q\otimes\mbox{Sym}^2(C)$ for $q\neq 0,~ C\neq 0$ are identified with points in $\frac{H^0(\mathcal{O}(2))^\times\times H^0(T_{\mathbb{P}^2}(-1))^\times}{(q, C)\sim (\alpha^2\cdot q, \alpha^{-1}\cdot C)}$ by $q\otimes\mbox{Sym}^2(C)\mapsto [(q, C)]$ and the sections of the form $\mbox{Sym}^2(A)$ are identified with points in $H^0(T_{\mathbb{P}^2})$. Under the identification $q\otimes\mbox{Sym}^2(C) = \mbox{Sym}^2(A)$ we arrive at a bijective correspondence 

\begin{align*}
&\mbox{Im}\left(\det|_{\mathcal{M}_{\mathbb{P}^2}(V_1^\rho, T_{\mathbb{P}^2})}\right)\to\\
&\left(\frac{H^0(\mathcal{O}(2))^\times\times H^0(T_{\mathbb{P}^2}(-1))^\times}{(q, C)\sim (\alpha^2\cdot q, \alpha^{-1}\cdot C)}\sqcup\frac{H^0(T_{\mathbb{P}^2})}{\{\pm 1\}}\right)\bigg/[(q, C)]\sim [A]\iff q\otimes\mbox{Sym}^2(C) = \mbox{Sym}^2(A).
\end{align*}

via the correspondence $q\otimes\mbox{Sym}^2(C)\mapsto [(q, C)];~~\mbox{Sym}^2(A)\mapsto [A]$.
\begin{remark}
Now we have
\begin{align*}
&\dim \mbox{Im}\left(\det|_{\mathcal{M}_{\mathbb{P}^2}(V_1^\rho, T_{\mathbb{P}^2})}\right) = \max \left\{\dim \frac{H^0(\mathcal{O}(2))^\times\times H^0(T_{\mathbb{P}^2}(-1))^\times}{(q, C)\sim (\alpha^2\cdot q, \alpha^{-1}\cdot C)};~\dim \frac{H^0(T_{\mathbb{P}^2})}{\{\pm 1\}}\right\}\\
& < \max\{9, 8\} = 9 < 27 = \dim H^0(\mbox{Sym}^2(T_{\mathbb{P}^2})).
\end{align*} Thus $\det|_{\mathcal{M}_{\mathbb{P}^2}(V_1^\rho, T_{\mathbb{P}^2})}$ is not surjective onto $H^0(\mbox{Sym}^2(T_{\mathbb{P}^2}))$.
\end{remark}

\end{proof}

\begin{theorem}\label{det3}
There is a bijection $\emph{Im}\left(\det|_{\mathcal{M}_{\mathbb{P}^2}(V_2^\rho, T_{\mathbb{P}^2})}\right)\to\frac{H^0(\mathcal{O}(2))^\times\times H^0(T_{\mathbb{P}^2}(-1))^\times}{(q, C)\sim (\alpha^2q, \alpha^{-1}C)}\sqcup\{0\}$ given by $q\otimes\emph{Sym}^2(C)\mapsto [(q, C)]$.
\end{theorem}

\begin{proof}
Since $V_2^\rho = T_{\mathbb{P}^2}$ is stable any co-Higgs bundle $(T_{\mathbb{P}^2}, \phi)$ is Gieseker stable. It follows from the Euler sequence of $T_{\mathbb{P}^2}$ that there is an isomorphism of complex vector spaces $H^0(\mbox{End}_0T_{\mathbb{P}^2}\otimes T_{\mathbb{P}^2})\cong H^0(\mbox{End}_0 T_{\mathbb{P}^2}\otimes\mathcal{O}(1))\otimes H^0(T_{\mathbb{P}^2}\otimes\mathcal{O}(-1))$ (\cite{Rayan}, \cite{Ste3}). Thus an algebraic tensor product of nonzero sections $\phi_0\in H^0(\mbox{End}_0 T_{\mathbb{P}^2}\otimes\mathcal{O}(1))$ and $C\in H^0(T_{\mathbb{P}^2}\otimes\mathcal{O}(-1))$ is a section of $\mbox{End}_0T_{\mathbb{P}^2}\otimes T_{\mathbb{P}^2}$. This algebraic tensor product coincides with the tensor product section $\phi_0\otimes C$. To verify this fact let us choose a global section $\phi = \phi_0\otimes C$. Then $\det(\phi) = \det(\phi_0)\otimes\mbox{Sym}^2(C)$. The zero scheme of $\mathcal{Z}(\det(\phi_0))$ is either $\mathbb{P}^2$ or a conic and $\mathcal{Z}(C)$ is a point. In all cases, $C$ is unique up to a $\mathbb{C}^\times$ action, so is $\phi_0$ and their algebraic tensor product remains an invariant.\\

On the other hand, a co-Higgs field $\phi = \sum c_{ij}\phi_i\otimes C_j$ is integrable if and only if $\phi = \phi_0\otimes C$. Thus every integrable co-Higgs field modelled on $T_{\mathbb{P}^2}$ is $\phi = \phi_0\otimes C$. Moreover it follows that the restricted determinant morphism $\det: H^0(\mbox{End}_0 T_{\mathbb{P}^2}\otimes \mathcal{O}(1))\to H^0(\mathcal{O}(2))$ is surjective. It suffices to verify this fact on trivializing charts. As a complex manifold $\mathbb{P}^2$ is union of three open sets while on each of these open sets exactly one complex component is nonzero. The trivializing charts of $\mathbb{P}^2$ are given by
\begin{equation}
\begin{cases}
\psi_1([z:w:1]) = (z, w)\\
\psi_2([z:1:w]) = (z, w)\\
\psi_3([1:z:w]) = (z, w).
\end{cases}
\end{equation}
Letting $(z, w)$ be coordinates on the first trivializing chart $\psi_1$ the trivialization maps on $T_{\mathbb{P}^2}$ are given by

\begin{equation}
g_{12}' = \begin{bmatrix}
\frac{1}{w} & -\frac{z}{w^2}\\
0 & -\frac{1}{w^2}
\end{bmatrix};~ g_{23}' = \begin{bmatrix}
-\frac{1}{z^2} & 0\\
-\frac{w}{z^2} & \frac{1}{z}
\end{bmatrix};~ g_{31}' = \begin{bmatrix}
    -\frac{w}{z^2} & \frac{1}{z}\\
    -\frac{1}{z^2} & 0
\end{bmatrix}
\end{equation}

The local equation of a global section $\phi_0\in H^0(\mbox{End}_0(T_{\mathbb{P}^2})\otimes\mathcal{O}(1))$ is given by 
\begin{equation}
\begin{bmatrix}
F(z, w) & G(z, w)\\
H(z, w) & -F(z, w)
\end{bmatrix}\cdot\begin{bmatrix}
\frac{1}{w} & -\frac{z}{w^2}\\
0 & -\frac{1}{w^2}
\end{bmatrix} = \begin{bmatrix}
1 & -\frac{z}{w}\\
0 & -\frac{1}{w}
\end{bmatrix}\cdot\begin{bmatrix}
f(\frac{z}{w}, \frac{1}{w}) & g(\frac{z}{w}, \frac{1}{w})\\
h(\frac{z}{w}, \frac{1}{w}) & -f(\frac{z}{w}, \frac{1}{w})
\end{bmatrix}.
\end{equation}
In particular, the determinant of the matrix $\begin{bmatrix}
F(z, w) & G(z, w)\\
H(z, w) & -F(z, w)
\end{bmatrix}$ is a polynomial in $z, w$ of total degree $\leq 2$ which represents $\det(\phi_0)\in H^0(\mathcal{O}(2))$ on one coordinate neighbourhood.\\

A comparison of the coefficients in the power series of $F, G, H$ around $(0,0)$ with the respective power series of $f, g, h$ suggests that $H, h$ are constants, $F, f$ are polynomials in $z, w$ of degree $\leq 1$ and $G, g$ are polynomials in $z, w$ of total degree $\leq 2$. It is already observed that every complex polynomial of total degree $\leq 2$ is of the form $-F^2 - GH$. So the determinant morphism is surjective onto $H^0(\mathcal{O}(2))$. The same reasoning in the proof of Theorem \ref{det1} concludes that  $\mbox{Im}\left(\det|_{\mathcal{M}_{\mathbb{P}^2}(V_2^\rho, T_{\mathbb{P}^2})}\right)$ is in bijective correspondence with the space $$\frac{H^0(\mathcal{O}(2))^\times\times H^0(T_{\mathbb{P}^2}(-1))^\times}{(q, C)\sim (\alpha^2q, \alpha^{-1}C)}\sqcup\{0\}$$ via the correspondence $q\otimes\mbox{Sym}^2(C)\mapsto [(q, C)]$.
\end{proof}

\begin{remark}
An estimate of dimensions similar to the one given in Remark \ref{dimcom} suggests that $\det|_{\mathcal{M}_{\mathbb{P}^2}(V_2^\rho, T_{\mathbb{P}^2})}$ is not surjective onto $H^0(\mbox{Sym}^2(T_{\mathbb{P}^2}))$.
\end{remark}

\begin{theorem}\label{det4}
For $k>3$ there is a bijection $\emph{Im}\left(\det|_{\mathcal{M}_{\mathbb{P}^2}(V_k^\rho, T_{\mathbb{P}^2})}\right)\to\frac{H^0(T_{\mathbb{P}^2}(-1))}{\{\pm 1\}}$ given by $\emph{Sym}^2(A)\mapsto [A]$.
\end{theorem}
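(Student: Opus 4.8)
The plan is to push the entire computation onto the space $H^0(\mathrm{End}_0(V_k^\rho)\otimes\mathcal{O}(1))$ of admissible factors $\phi_0$ and to show that for $k>3$ this space is as small as it can be. By the characterization recalled in the introduction (valid for every $k\neq 3$), each nonzero stable traceless co-Higgs field on $V_k^\rho$ is a pure tensor $\phi=\phi_0\otimes C$ with $\phi_0\in H^0(\mathrm{End}_0(V_k^\rho)\otimes\mathcal{O}(1))$ and $C\in H^0(T_{\mathbb{P}^2}(-1))$, whence $\det(\phi)=\det(\phi_0)\otimes\mathrm{Sym}^2(C)$ with $\det(\phi_0)\in H^0(\mathcal{O}(2))$. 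So the whole theorem reduces to determining which conics arise as $\det(\phi_0)$.

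First I would produce a distinguished element of $H^0(\mathrm{End}_0(V_k^\rho)\otimes\mathcal{O}(1))$ from the cover itself. Realizing $f^\rho$ as the relative spectrum of the algebra $\mathcal{A}=f^\rho_*\mathcal{O}_{\mathbb{P}^1\times\mathbb{P}^1}=\mathcal{O}\oplus\mathcal{O}(-1)$, whose multiplication $\mathcal{O}(-1)\otimes\mathcal{O}(-1)\xrightarrow{\rho}\mathcal{O}$ is the branch conic, the module $V_k^\rho=f^\rho_*\mathcal{O}(0,k)$ carries multiplication by the tautological odd generator $y$ (satisfying $y^2=\rho$). This is a canonical map $\Theta\colon V_k^\rho\to V_k^\rho\otimes\mathcal{O}(1)$ with $\Theta^2=\rho\cdot\mathrm{id}$; since $y$ is anti-invariant under the covering involution, $\mathrm{tr}(\Theta)=0$, and Cayley--Hamilton for a trace-free rank-two map yields $\det(\Theta)=-\rho$.

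The crux, and the step I expect to be the main obstacle, is to show that $\Theta$ spans $H^0(\mathrm{End}_0(V_k^\rho)\otimes\mathcal{O}(1))$ for $k>3$, i.e.\ that this space is one-dimensional. I would compute $h^0(\mathrm{End}(V_k^\rho)\otimes\mathcal{O}(1))=\dim\mathrm{Hom}(V_k^\rho,V_k^\rho(1))$ directly on the cover: by the projection formula $V_k^\rho(1)=f^\rho_*\mathcal{O}(1,k+1)$, and the adjunction $(f^\rho)^*\dashv f^\rho_*$ rewrites the Hom as $\mathrm{Hom}_{\mathbb{P}^1\times\mathbb{P}^1}((f^\rho)^*f^\rho_*\mathcal{O}(0,k),\mathcal{O}(1,k+1))$. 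Feeding in the counit sequence $0\to\mathcal{O}(k-1,-1)\to(f^\rho)^*f^\rho_*\mathcal{O}(0,k)\to\mathcal{O}(0,k)\to0$ and applying K\"unneth, the $\mathcal{O}(k-1,-1)$ contribution vanishes once $k\geq3$, leaving $\mathrm{Hom}(\mathcal{O}(0,k),\mathcal{O}(1,k+1))=H^0(\mathcal{O}(1,1))\cong\mathbb{C}^4$. Removing the three-dimensional trace part $H^0(\mathcal{O}(1))$ gives $h^0(\mathrm{End}_0(V_k^\rho)\otimes\mathcal{O}(1))=1$, so $\phi_0=\lambda\Theta$ and $\det(\phi_0)=-\lambda^2\rho$ is forced to be a multiple of the conic. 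It is worth noting that this count still returns a one-dimensional space at $k=3$; what fails there is the product decomposition $\phi=\phi_0\otimes C$ itself, which is exactly why that case must be excluded separately.

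Finally I would assemble the image. With $\phi_0=\lambda\Theta$ one has $\det(\phi)=-\lambda^2\rho\otimes\mathrm{Sym}^2(C)=-\rho\otimes\mathrm{Sym}^2(\lambda C)$, so every nonzero determinant equals $-\rho\otimes\mathrm{Sym}^2(C')$ for some $C'\in H^0(T_{\mathbb{P}^2}(-1))^\times$. Since $\rho$ is a fixed nonzero section, multiplication by $\rho$ is injective on $H^0(\mathrm{Sym}^2(T_{\mathbb{P}^2})\otimes\mathcal{O}(-2))$, and by the two preceding lemmas $\mathrm{Sym}^2(C')$ pins down $C'$ up to sign; hence $C'\mapsto-\rho\otimes\mathrm{Sym}^2(C')$ descends to a bijection of $H^0(T_{\mathbb{P}^2}(-1))^\times/\{\pm1\}$ onto the nonzero determinants. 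The zero field, stable because $V_k^\rho$ is stable for $k>3$, contributes the remaining point $\{0\}$, which produces the asserted bijection.
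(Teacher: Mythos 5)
Your proposal is correct and reaches the theorem along the same skeleton as the paper: decompose $\phi=\phi_0\otimes C$, observe $\det(\phi_0)$ is a scalar multiple of $\rho$, and conclude that the nonzero determinants are exactly $\lambda\rho\otimes\mathrm{Sym}^2(C)$, identified up to $C\mapsto -C$ by the two lemmas. The difference is one of substantiation rather than strategy: the paper simply asserts, citing the Euler sequence of $V_k^\rho$ and the literature on Schwarzenberger co-Higgs bundles, that every $\phi$ is a pure tensor $\phi_0\otimes C$ with $\det(\phi_0)=\lambda\rho$, whereas you actually prove the two facts this rests on. Your adjunction computation on the double cover (projection formula giving $V_k^\rho(1)=f^\rho_*\mathcal{O}(1,k+1)$, the counit sequence $0\to\mathcal{O}(k-1,-1)\to (f^\rho)^*f^\rho_*\mathcal{O}(0,k)\to\mathcal{O}(0,k)\to 0$, and the vanishing of $H^0(\mathcal{O}(2-k,k+2))$ for $k\geq 3$) correctly yields $h^0(\mathrm{End}_0(V_k^\rho)\otimes\mathcal{O}(1))=1$, which both forces every element of the tensor-product space to be a pure tensor and pins $\phi_0$ down to a multiple of the canonical operator $\Theta$ (multiplication by the odd generator $y$ with $y^2=\rho$), whence $\det(\phi_0)=-\lambda^2\rho$ by Cayley--Hamilton. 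This buys a self-contained proof of the one step the paper outsources, and your aside about $k=3$ (the count is still $1$ but the pure-tensor description of all stable fields fails) correctly locates why that case is excluded. The final assembly of the image, using injectivity of multiplication by the nonzero section $\rho$ and the sign ambiguity of $\mathrm{Sym}^2$, matches the paper's conclusion.
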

\begin{proof}
From an Euler sequence of $V_k^\rho$ it follows that there is an isomorphism of complex vector spaces $H^0(\mbox{End}_0(V_k^\rho)\otimes\mathcal{O}(1))\otimes H^0(T_{\mathbb{P}^2}(-1))\cong H^0(\mbox{End}_0(V_k^\rho)\otimes T_{\mathbb{P}^2})$. Here every $\phi$ is integrable and is of the form $\phi = \phi_0\otimes C$ where $\det(\phi_0) = \lambda\cdot \rho$ for some $\lambda\in\mathbb{C}^\times$. Hence $\det(\phi) = \lambda\rho\otimes\mbox{Sym}^2(C)$ and 

$$\mbox{Im}\left(\det|_{\mathcal{M}_{\mathbb{P}^2}(V_k^\rho, T_{\mathbb{P}^2})}\right) = \left\{\rho\otimes\mbox{Sym}^2(C): C\in H^0(T_{\mathbb{P}^2}(-1))\right\}.$$

Since $\rho\neq 0$ it follows that there is a bijection $\mbox{Im}\left(\det|_{\mathcal{M}_{\mathbb{P}^2}(V_k^\rho, T_{\mathbb{P}^2})}\right)\to\frac{H^0(T_{\mathbb{P}^2}(-1))}{\{\pm 1\}}$ given by $\mbox{Sym}^2(A)\mapsto [A]$.
\end{proof}

\begin{remark}

Finally, we have
\begin{align*}
&\dim \mbox{Im}\left(\det|_{\mathcal{M}_{\mathbb{P}^2}(V_k^\rho, T_{\mathbb{P}^2})}\right) = \dim \frac{H^0(T_{\mathbb{P}^2}(-1))}{\{\pm 1\}} \leq 3 < 27 = \dim H^0(\mbox{Sym}^2(T_{\mathbb{P}^2})).
\end{align*} Thus $\det|_{\mathcal{M}_{\mathbb{P}^2}(V_k^\rho, T_{\mathbb{P}^2})}$ is not surjective onto $H^0(\mbox{Sym}^2(T_{\mathbb{P}^2}))$ for $k>3$.
\end{remark}

\printbibliography

\end{document}